\newcommand{\rt}{\rightarrow}
\newcommand{\lrt}{\longrightarrow}
\newcommand{\pa}{\partial}
\newcommand{\C}{\mathbf{C} }
\newcommand{\D}{\mathbf{D} }
\newcommand{\K}{\mathbf{K} }
\newcommand{\X}{\mathbf{X} }
\newcommand{\Y}{\mathbf{Y} }
\newcommand{\G}{\mathbf{G} }
\newcommand{\I}{\mathbf{I} }
\newcommand{\F}{\mathbf{F} }
\newcommand{\CO}{\mathcal{O}}
\newcommand{\T}{\mathbf{T} }
\newcommand{\Z}{\mathbb{Z} }
\newcommand{\CE}{\mathcal{E}}
\newcommand{\CC}{\mathcal{C} }
\newcommand{\CF}{\mathcal{F} }
\newcommand{\CG}{\mathcal{G} }
\newcommand{\CL}{\mathcal{L} }
\newcommand{\CH}{\mathcal{H} }
\newcommand{\CK}{\mathcal{K} }
\newcommand{\CP}{\mathcal{P} }
\newcommand{\CQ}{\mathcal{Q} }
\newcommand{\CS}{\mathcal{S} }
\newcommand{\CT}{\mathcal{T} }
\newcommand{\CJ}{\mathcal{J} }
\newcommand{\Inj}{{\rm{Inj}}}
\newcommand{\Prj}{{\rm{Proj}}}
\newcommand{\Flat}{{\rm{Flat}}}
\newcommand{\KCOFR}{{\K({\rm{Cof}} R)}}
\newcommand{\KFX}{{\mathbf{K}({\Flat} X)}}
\newcommand{\KCOFX}{{\K({\rm{Cof}} X)}}
\newcommand{\KPFX}{{\mathbf{K}_{\rm{pac}}({\Flat} X)}}
\newcommand{\DPFX}{{\mathbf{D}_{\rm{pac}}({\Flat} X)}}
\newcommand{\DPABX}{{\mathbf{D}_{\rm{pur}}({\mathrm{Abs}} X)}}
\newcommand{\DPFR}{{\mathbf{D}_{\rm{pur}}({\Flat} R)}}
\newcommand{\KPACX}{{\mathbf{K}_{\rm{pac}}(X)}}
\newcommand{\DPACX}{{\mathbf{D}_{\rm{pur}}(X)}}
\newcommand{\KPAIX}{{\mathbf{K}_{\rm{pac}}({\Inj} X)}}
\newcommand{\KABX}{{\mathbf{K}({\mathrm{Abs}} X)}}
\newcommand{\KPABX}{{\mathbf{K}_{\rm{pac}}({\mathrm{Abs}} X)}}
\newcommand{\KPFR}{{\mathbf{K}_{\rm{pac}}({\Flat} R)}}
\newcommand{\CPX}{{\mathbf{C}_{\rm{pac}}(X)}}
\newcommand{\CPFR}{{\mathbf{C}_{\rm{pac}}({\Flat} R)}}
\newcommand{\CPFX}{{\mathbf{C}_{\rm{pac}}({\Flat} X)}}
\newcommand{\CPABX}{{\mathbf{C}_{\rm{pac}}({\mathrm{Abs}} X)}}
\newcommand{\KPR}{{\mathbf{K}({\Prj}  R)}}
\newcommand{\KFR}{{\mathbf{K}({\Flat}  R)}}
\newcommand{\KIX}{{\mathbf{K}({\Inj}  X)}}
\newcommand{\KPIX}{{\mathbf{K}({\mathrm{Pinj}}  X)}}
\newcommand{\Hom}{{\rm{Hom}}}
\newcommand{\Ext}{{\rm{Ext}}}
\newcommand{\limt}{\underset{\underset{j \in J}{\longrightarrow}}{\lim}}
\newtheorem{theorem}{Theorem}[section]
\newtheorem{corollary}[theorem]{Corollary}
\newtheorem{lemma}[theorem]{Lemma}
\newtheorem{proposition}[theorem]{Proposition}
\theoremstyle{definition}
\newtheorem{example}[theorem]{Example}
\newtheorem{remark}[theorem]{Remark}
\newtheorem{s}[theorem]{}
\theoremstyle{plain}
\newtheorem{stheorem}{Theorem}[subsection]
\newtheorem{scorollary}[stheorem]{Corollary}
\newtheorem{sproposition}[stheorem]{Proposition}
\theoremstyle{definition}
\newtheorem{sdefinition}[stheorem]{Definition}
\newtheorem{sremark}[stheorem]{Remark}
\numberwithin{equation}{section}
\begin{document}

\title[The Pure derived category of quasi-coherent sheaves]{The Pure derived category of quasi-coherent sheaves}
\author[ Esmaeil Hosseini]{Esmaeil Hosseini  }

\address{Department of Mathematics, Shahid Chamran University of
Ahvaz, P.O.Box: 61357-83151, Ahvaz, Iran.}
\email{e.hosseini@scu.ac.ir}

\keywords{Homotopy category, adjoint functors, pure derived category.\\
2010 Mathematical subject classification: 14F05, 18E30.}
\begin{abstract}
Let $X$ be a quasi-compact and quasi-separated (not necessarily
semi-separated) scheme. The category $\mathfrak{Qco}X$ of all
quasi-coherent sheaves of $\CO_X$-modules has several different pure
derived categories. Recently, categorical pure derived categories of
$X$ have been studied in more details. In this work, we focus on the
geometrical purity and find a replacement for the geometrical pure
derived category of $X$.

\end{abstract}
\maketitle
\section{Introduction}

Assume that $\CC$ is an abelian category. The derived category of
$\CC$ was invented by Grothendieck and his student Verdier to
simplify the theory of derived functors on $\CC$ (\cite{Ver}). It
was obtained from the category of complexes in $\CC$ by formally
inverting all quasi-isomorphisms. This shows that the definition  of
 derived category is closely related to the concept of localization
of triangulated categories, i.e. it is defined as the localization
of the homotopy category of $\CC$ with respect to the class of all
quasi-isomorphisms. But, this method is not applicable for
non-abelian categories. In fact, if $\mathbb{E}$ is an exact
category (not necessarily abelian),  the unbounded derived category
$\D(\mathbb{E})$ of $\mathbb{E}$ can not be defined in the same way.
In 1990, Neeman showed that $\D(\mathbb{E})$ can be determined by
the quotient of the homtopy category of $\mathbb{E}$ modulo its
thick (or $\acute{\mathrm{e}}$paisse) subcategory consisting of all
exact complexes (\cite{Ne90}). This approach is independent of the
choice of exact structure and is suitable for a category with
various exact structures. Especially, it is suitable for the
category $\mathfrak{Qco}X$ of all quasi-coherent sheaves over a
given scheme $X$. In non-affine case,  the category
$\mathfrak{Qco}X$ admits three different exact structures. One of
them is defined by the abelian exact sequences and the others are
defined by pure exact sequences (see Example \ref{mor1}).

Assume that $X$ is a quasi-compact and quasi-separated scheme. By
\cite{TT}, $\mathfrak{Qco}X$ is a locally finitely presented
Grothendieck category and so,  it is a natural framework to define
the categorical pure exact sequences (see \cite{Cr94}). Recall that,
a short exact sequence $\CL$ of quasi-coherent $\CO_X$-modules is
called categorical pure if for any finitely presented quasi-coherent
$\CO_X$-module $\CG$, $\mathrm{Hom}_{\CO_X}(\CG,\CL)$ is exact. The
pure derived category with respect to this exact structure, denoted
by $\D_{\mathrm{cpur}}(X)$, is defined by \cite{Ne90}. In affine
case, when $R$ is a ring, the pure derived category of $R$ was first
introduced in \cite{CH02} by formally inverting pure homological
isomorphisms. In non-affine case, $\D_{\mathrm{cpur}}(X)$ is studied
in more details by several authors (see \cite{K12}, \cite{Sto14},
\cite{Gi15}, \cite{ZH16}). Another pure exact structure on
$\mathfrak{Qco}X$ is defined by tensor pure exact sequences. Recall
that, a short exact sequence $\CL$ of quasi-coherent $\CO_X$-modules
is called tensor pure if for any quasi-coherent $\CO_X$-module
$\CG$, $\CG\otimes_{\CO_X}\CL$ is exact. The pure derived category
with respect to this exact structure is defined by \cite{Ne90} and
denoted by $\DPACX$. This category was first appeared in
\cite{EGO14}. These pure derived categories encouraged us to ask the
following question.

\vspace{.5cm} \textbf{Question:} When $\DPACX$ and
$\D_{\mathrm{cpur}}(X)$ are coincide? \vspace{.5cm}

This  is our motivation on the present  work. To find an answer to
this question, we should compare the pure exact structures on
$\mathfrak{Qco}X$. In Example \ref{mor1}, if $X$ is a projective
space over a commutative noetherian ring $R$, we find a finitely
presented quasi-coherent $\CO_X$-module which is not tensor pure
projective. Consequently,  \textbf{the} \textbf{tensor}
\textbf{pure} \textbf{exact} \textbf{structure} \textbf{does}
\textbf{not} \textbf{coincide} \textbf{with} \textbf{the}
\textbf{categorical} \textbf{pure} \textbf{exact} \textbf{structure}
and so,  $\DPACX$ and $\D_{\mathrm{cpur}}(X)$ are different. This
encouraged us to concentrate our study on \textbf{tensor}
\textbf{pure} \textbf{derived} \textbf{categories}. In Section 2, we
prove that any pure acyclic complex of pure injective
$\CO_X$-modules is contractible. In Section 3, we prove that the
homotopy category of all pure injective quasi-coherent
$\CO_X$-modules  and $\DPACX$ are equivalent.

Before starting, let us fix some notations and definitions. In this
paper, $X$ is a quasi-compact and quasi-separated scheme,
$\CO_X$-modules are quasi-coherent sheaves of $\CO_X$-modules and
$\mathfrak{Qco}X$ is the category of all $\CO_X$-modules.

\begin{s}{\sc Exact categories.} Let $\mathbb{E}$ be an additive category. A sequence
$\xymatrix@C0.7pc@R0.9pc{ \X\ar[r]^f&\Y\ar[r]^g&\mathbf{Z}}$ in
$\mathbb{E}$ is said to be a \textit{conflation} if $f$ is the
kernel of $g$ and $g$ is the cokernel of $f$. A map such as $f$ is
called an \textit{inflation} and $g$ is called a \textit{deflation}.
Let $\CE$ be a class of conflations in $\mathbb{E}$. The pair
$(\mathbb{E}, \CE)$ is an exact category if the following axioms
hold.
\begin{itemize}
\item [(i)]  $\CE$ is closed under isomorphisms.
\item [(ii)] $\CE$ contains  all   split exact sequences.
\item [(iii)] Deflations (resp. Inflations) are closed under composition.
\item [(iv)] All pullbacks (resp. pushouts) of  deflations (resp. inflations)  exist.
\item [(v)] Deflations (resp. Inflations) are stable under pullbacks (resp. pushouts).
\end{itemize}

For more details, see \cite[Appendix A]{Ke90} or \cite[Section
3]{Sto13}.

An exact category $\mathbb{E}$ is called $\textit{efficient}$ if

\begin{itemize}
\item [(i)] Every section  in $\mathbb{E}$
has a cokernel or, equivalently, every retraction in $\mathbb{E}$
has a kernel (\cite[Lemma 7.1]{Bu10}).
\item [(ii)] Arbitrary transfinite compositions of inflations exist
and are themselves inflations.
\item [(iii)] Every object of $\mathbb{E}$ is small relative to the
class of all inflations.
\item [(iv)] $\mathbb{E}$ admits a generator.

\end{itemize}

\end{s}

\begin{s}{\sc Orthogonality in exact categories.} Let $\mathbb{E}$  be an exact category  and $\X, \Y\in \mathbb{E}$.
The set of all equivalence classes of conflations  $ \Y\rightarrow
\mathbf{P} \rightarrow \X $ in $\mathbb{E}$ is denoted by
$\Ext^1_{\mathbb{E}}(\X,\Y)$. A class $\mathbb{Y}$ is said to be the
\textit{right} \textit{orthogonal} of a class $\mathbb{X}$ if
$$\mathbb{Y}=\mathbb{X}^\perp:=\{\mathbf{B} \in \mathbb{E} \ | \ \Ext_{\mathbb{E}}^1(\X,\mathbf{B})=0, \ {\rm{for \ all}}
\ \X \in \mathbb{X} \}.$$

The \textit{left} \textit{orthogonal} is defined dually. A pair
$(\mathbb{X}, \mathbb{Y})$ of classes in $\mathbb{E}$ is called a
\textit{complete} \textit{cotorsion} \textit{theory} if

\begin{itemize}
\item [(i)] $\mathbb{X}^\perp=\mathbb{Y}$ and
$^\perp\mathbb{Y}=\mathbb{X}$.
\item [(ii)] Any object in $\mathbb{E}$
has a special $\mathbb{Y}$-preenvelope and a special
$\mathbb{X}$-precover.

\end{itemize}

Recall that, an object $\mathbf{E} \in \mathbb{E}$ has a
\textit{special} $\mathbb{X}$-\textit{precover} (resp.
\textit{special} $\mathbb{Y}$-\textit{preenvelope}) if there exists
a conflation $\Y' \rt \X' \rt \mathbf{E} $ (resp. $ \mathbf{E} \rt
\Y' \rt \X' $), where $\X'\in \mathbb{X}$ and $\Y' \in \mathbb{Y}$.
 Also, $(\mathbb{X},
\mathbb{Y})$ is \textit{cogenerated} by a set, if there is a set
$\CT\subseteq\mathbb{X}$ such that $\CT^\perp= \mathbb{X}^\perp$.
\end{s}

\textbf{Setup:} Throughout  this paper, \textbf{all} \textbf{pure}
\textbf{exact} \textbf{sequences} \textbf{are} \textbf{tensor}
\textbf{pure}, all \textbf{pure} \textbf{injective} (resp.
\textbf{pure} \textbf{projective}, \textbf{flat}) objects are
\textbf{tensor} pure injective (resp. pure projective, flat) and all
\textbf{equivalences} between triangulated categories are
\textbf{triangle} \textbf{equivalences}.

\section{Pure acyclic  complexes of $\CO_X$-modules}

Recall that,  $X$ is called $\textit{quasi-separated}$ if the
intersection of any two quasi-compact open subsets is quasi-compact.
Let $\mathfrak{U}=\{U_i=\mathrm{Spec}(R_i)\}_{i=0}^n$ be a
quasi-separating open  cover of $X$. First, we compare the
categorical and the geometrical notions of purity in
$\mathfrak{Qco}X$. Note that, any of these notions induces a class
of pure injectives (resp. pure projectives). Categorical pure
injectives are discussed in \cite{Her03} and  pure injective objects
are discussed in \cite{EEO16}. In \cite[Theorem 4.10]{EEO16}, the
author proved that $\mathfrak{Qco}X$ has enough pure injective
objects. But, the pure projective objects have a different fate. By
\cite{TT}, there is a set $S$ of finitely presented objects in
$\mathfrak{Qco}X$ such that any $\CO_X$-module $\CG$  is a direct
limit of elements in $S$. Then, there exists a categorical pure
exact sequence $\xymatrix@C-0.7pc{0\ar[r]&\CK\ar[r]&\oplus_{\CF\in
S}\CF\ar[r]&\CG\ar[r]&0 }$ where  $\oplus_{\CF\in S}\CF$ is a
categorical pure projective $\CO_X$-module. Therefore,
$\mathfrak{Qco}X$ has enough categorical pure projective objects.
But, in general case, we don't know whether $\mathfrak{Qco}X$ has
enough pure projective objects? However, whether this is the case or
not, we give some examples of  non-pure projective  $\CO_X$-modules.
These examples show that the pure exact structures on
$\mathfrak{Qco}X$ are different.

\begin{example}\label{mor1}
Assume that $X = \mathrm{Proj} R[x_0,x_1,...,x_n]$ is the projective
$n$-space over a noetherian commutative ring $R$. The structure
sheaf $\CO_X$ is not pure projective. If it is pure projective then
any short exact sequence ending in $\CO_X$ splits ($\CO_X$ is flat
and so any short exact sequences ending in $\CO_X$ is pure). It
follows that $\CO_X$ is projective and so by \cite[Proposition III.
6.3]{H}, $\Ext_{\CO_X}^i(\CO_X,\CG)\cong\mathrm{H}^i(X,\CG)=0$ for
any $\CO_X$ module $\CG$ and any $i>0$. But, this is a contradiction
with \cite[Theorem III. 3.7]{H} and \cite[Theorem III. 5.1]{H}.
Consequently, $\CO_X$ is a non-pure projective finitely presented
$\CO_X$-module.
\end{example}
\begin{example}\label{mor01}
Vector bundles over $X$ are not pure projective. Since any pure
projective flat $\CO_X$-module is projective. We known that
projective objects are rare in $\mathfrak{Qco}X$ (\cite[Ex. III.
6.2]{H}) and any vector bundle is  flat $\CO_X$-module.
\end{example}

Let $\CE$ be the class of all pure exact sequences in
$\mathfrak{Qco}X$. This class induces an exact structure on the
category $\C(X)$ of all complexes (complexes are write
cohomologically) in $\mathfrak{Qco}X$ as follows. A sequence
$$\xymatrix@C-0.7pc{ 0\ar[r]&\T\ar[r]^f&\F\ar[r]^g&\G\ar[r]&0}$$ of
complexes of $\CO_X$-modules is called a conflation in $\C(X)$ if it
is degree-wise pure exact, i.e. for any $n\in\Z$,
$$\xymatrix@C-0.7pc{
0\ar[r]&\T^n\ar[r]^{f^n}&\F^n\ar[r]^{g^n}&\G^n\ar[r]&0}\in\CE.$$
This  exact structure on $\C(X)$ is called pure exact structure. In
the present work, we select this \textbf{exact} structure and prove
that any pure acyclic complex of pure injectives is contractible.
Recall that a complex $\X$ of $\CO_X$-modules is called acyclic if
all cohomological groups are trivial and it is called pure acyclic
if for any $\CO_X$-module $\CG$, $\X\otimes_{\CO_X}\CG$ is acyclic.
In \cite{Sto14}, the author proved the following result.
\begin{proposition}\label{pesaram}
Any categorically pure acyclic complex of $R$-modules is the direct
limit of contractible complexes.
\end{proposition}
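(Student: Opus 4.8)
The plan is to prove that any pure acyclic complex $\X$ of $R$-modules can be written as a direct limit of contractible complexes. The essential structural fact I would exploit is that pure acyclicity is a strong, \emph{local} condition: for a pure acyclic complex, each cycle submodule $Z^n = \Ker(\pa^n)$ is a pure submodule of $X^n$, and the short exact sequences $\xymatrix@C-0.7pc{0\ar[r]&Z^n\ar[r]&X^n\ar[r]&Z^{n+1}\ar[r]&0}$ are all pure exact. This is precisely what lets me approximate the complex by small pieces that remain pure acyclic, using the same transfinite/cardinality machinery already deployed in Proposition \ref{main123}. So the first step is to record this cycle-wise purity and to fix an infinite cardinal $\kappa$ bounding the data (here $\kappa\geq\aleph_0$ together with $|R|$).

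The second step is a smallness/approximation argument. Given any finite subset — or more generally a subset of cardinality $\leq\kappa$ — of the entries of $\X$, I would build a pure acyclic subcomplex $\X_0\subseteq\X$ with $|\X_0|\leq\kappa$ containing that data, exactly mirroring the second part of the proof of Proposition \ref{main123}: choose small pure submodules of each $Z^n$ and of each $X^n$ compatibly, using \cite[Proposition 3.3]{EE05}, and close off inductively in both directions so that the resulting subcomplex is again degree-wise pure with pure-exact cycle sequences, hence pure acyclic. The key upgrade I need beyond Proposition \ref{main123} is that these $\kappa$-small pure acyclic subcomplexes form a \emph{directed} system whose union is all of $\X$: any two $\kappa$-small pure acyclic subcomplexes sit inside a common one, and every element of $\X$ lies in some member. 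Then $\X=\varinjlim \X_0$ over this directed poset of small subcomplexes.

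The third step is to identify the members of the directed system, or a cofinal subsystem, with contractible complexes. Here the point is that a $\kappa$-small pure acyclic complex over a ring is built from pure-exact short exact sequences $0\to Z^n\to X^n\to Z^{n+1}\to 0$, and the plan is to show each such bounded-size pure acyclic complex is contractible, or at least a direct limit of contractibles, by realizing it as a transfinite extension of the elementary contractible complexes $\cdots\to 0\to M\st{1}{\to} M\to 0\to\cdots$ associated to pure submodules. Concretely, one filters $\X_0$ by pure subcomplexes with contractible successive quotients, using that purity guarantees the filtration splits degree-wise in a controlled way; a colimit of such split pieces is again a direct limit of contractible complexes. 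The passage from "direct limit of small pure acyclics" to "direct limit of contractibles" is completed by replacing each $\X_0$ by the directed system of its own contractible approximations and invoking that a direct limit of direct limits is a direct limit.

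I expect the main obstacle to be the third step: establishing that the $\kappa$-small pure acyclic complexes are themselves direct limits of contractible complexes, i.e. getting the genuine \emph{contractibility} rather than mere pure acyclicity out of the cardinality reduction. Pure acyclicity does not force degree-wise splitting on the nose, so the argument must carefully exploit that the cycle submodules are pure and build an explicit contracting filtration; the delicate bookkeeping is ensuring the inductive choices of small pure submodules in both positive and negative degrees remain mutually compatible so that the contractions glue. Once that local contractibility (up to direct limits) is in hand, assembling the global statement is formal via the directed colimit $\X=\varinjlim\X_0$.
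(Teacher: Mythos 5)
Your proposal has a genuine gap, and it sits exactly where you suspect: the third step. The cardinality reduction of your first two steps (mirroring Proposition \ref{main123}) is sound as far as it goes, but it does not make the problem any easier, because $\kappa$-small pure acyclic complexes are not contractible for any infinite $\kappa$: over $R=\Z$ the group $\Ext^1_{\Z}(\Q,\Z)$ is nonzero, so there is a non-split extension $0\to\Z\to B\to\Q\to 0$ with countable terms, and it is pure exact because $\Q$ is flat; viewed as a complex it is a countable pure acyclic complex that is not contractible. Thus after the reduction you are left with precisely the original problem, merely with a cardinal bound attached. Your proposed way out --- filtering $\X_0$ by pure subcomplexes ``with contractible successive quotients, using that purity guarantees the filtration splits degree-wise'' --- assumes exactly what has to be proved: purity provides no such splitting (that is the entire difficulty of the statement), and the subquotients produced by the deconstruction machinery of Proposition \ref{main123} are merely small and pure acyclic, not contractible. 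Moreover, even granting such a filtration, a transfinite extension with contractible subquotients is not the same thing as a direct limit of contractible complexes; and your closing move, ``a direct limit of direct limits is a direct limit,'' is not automatic either, since filtered-colimit presentations are not functorial and the class of filtered colimits of a class $\mathcal{C}$ is only guaranteed to be closed under filtered colimits when the objects of $\mathcal{C}$ can be taken finitely presented (contractible complexes are not).

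The missing idea --- which is how the sources the paper itself cites for this statement (\cite[Lemma 4.14]{Sto14} and \cite[Proposition 2.2]{Em16}; the paper gives only the citation, no argument) actually proceed --- is to use \emph{finitely presented} complexes rather than merely small ones. One writes $\X=\varinjlim \X_i$ with each $\X_i$ a bounded complex of finitely presented modules (these are the finitely presented objects of $\C(R)$), proves that pure acyclicity of $\X$ forces every chain map $\X_i\to\X$ to be null-homotopic (this is where purity and finite presentation genuinely interact, via lifting against pure epimorphisms), observes that a null-homotopic map factors through the contractible complex $\mathrm{Cone}(\mathrm{id}_{\X_i})$, and finally reorganizes these factorizations into a filtered system of contractible complexes whose colimit is $\X$. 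Cardinality smallness cannot substitute for finite presentation in this mechanism, which is why your third step cannot be completed along the lines you describe.
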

\begin{proof}
See \cite[Lemma 4.14]{Sto14} or \cite[Proposition 2.2]{Em16}.
\end{proof}

It can not be the case in general that every tensor pure acyclic
complex is the direct limit of contractible complexes. The reason is
that a direct limit of contractible complexes is always
categorically pure acyclic, and there are in general fewer
categorically pure  acyclic complexes than tensor pure ones. But, we
try to prove that any pure acyclic complex of pure injective
$\CO_X$-modules is contractible.

\begin{lemma}\label{pur112}
Any direct limit of pure exact sequences is pure.
\end{lemma}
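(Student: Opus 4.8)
The plan is to reduce the statement to two standard exactness properties of the Grothendieck category $\mathfrak{Qco}X$: that direct limits over a directed index set are exact, and that tensoring with a fixed object commutes with such limits. Concretely, I would begin with a directed system $(\CE_i)_{i\in I}$ of pure exact sequences $\CE_i:0\to\CK_i\to\CG_i\to\CF_i\to 0$ and form its direct limit $\CE=\Lim\CE_i$. Since $\mathfrak{Qco}X$ is a locally finitely presented Grothendieck category by \cite{TT}, its direct limits over a directed set are exact, so $\CE$ is again a short exact sequence $0\to\Lim\CK_i\to\Lim\CG_i\to\Lim\CF_i\to 0$.

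Next I would fix an arbitrary $\CO_X$-module $\CG$ and verify the defining (tensor) purity condition directly. Because the tensor product $\CG\otimes_{\CO_X}-$ is a left adjoint on $\CO_X$-modules, and direct limits of quasi-coherent sheaves are computed in $\CO_X$-modules, it preserves direct limits; this yields a natural isomorphism $\CG\otimes_{\CO_X}\CE\cong\Lim(\CG\otimes_{\CO_X}\CE_i)$. Each sequence $\CG\otimes_{\CO_X}\CE_i$ is exact, precisely because every $\CE_i$ is pure. Invoking once more the exactness of direct limits in $\mathfrak{Qco}X$, the direct limit of these exact sequences is exact, so $\CG\otimes_{\CO_X}\CE$ is exact. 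As $\CG$ was arbitrary, $\CE$ is pure.

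The only points requiring care, and where I expect the main (albeit modest) obstacle to sit, are the two structural facts being invoked: that direct limits in $\mathfrak{Qco}X$ are computed compatibly and are exact, and that $\otimes_{\CO_X}$ commutes with them. Both rest on $X$ being quasi-compact and quasi-separated, which guarantees that $\mathfrak{Qco}X$ is closed under the relevant filtered colimits inside the $\CO_X$-modules and that these are exact (the AB5 property of the Grothendieck category cited from \cite{TT}); the commutation with the tensor product is then automatic. Alternatively, I would record a stalk-wise argument: since $(\Lim\CE_i)_x\cong\Lim(\CE_i)_x$ for each $x\in X$ and a direct limit of pure exact sequences of $\CO_x$-modules is pure, the geometrical characterization of purity noted earlier shows that $\CE$ is pure.
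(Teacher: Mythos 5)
Your proof is correct and is essentially the paper's own argument: the paper's one-line proof invokes precisely the isomorphism $(\Lim\CE_i)\otimes_{\CO_X}\CG\cong\Lim(\CE_i\otimes_{\CO_X}\CG)$ together with the (implicit) AB5 exactness of direct limits in the Grothendieck category $\mathfrak{Qco}X$, which you spell out more carefully. Your added stalk-wise alternative is a fine variant but not needed; the tensor-commutation argument is exactly what the paper does.
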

\begin{proof}
Let $\{\CL_i,f_{ij}\}_{i,j\in J}$ be a direct system of pure exact
sequences of $\CO_X$-modules. The isomorphism
$(\limt\CL_j)\otimes_{\CO_X}\CG\cong\limt(\CL_j\otimes_{\CO_X}\CG)$
implies the purity of $\limt\CL_j$.
\end{proof}
\begin{lemma}\label{pur111}
Let $U$ be an affine open subset of $X$ and $f:U\lrt X$ be the
inclusion. Then $f_*:\mathfrak{Qco}U\lrt\mathfrak{Qco}X$ preserves
pure exact sequences.
\end{lemma}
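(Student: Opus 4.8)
The plan is to reduce everything to the affine situation on $U$ and then exploit the fact that pushforward along a quasi-compact and quasi-separated morphism commutes with direct limits. Write $U=\Spec A$, so that $\mathfrak{Qco}U$ is equivalent to the category of $A$-modules; under this equivalence a tensor pure exact sequence of $\CO_U$-modules corresponds exactly to a pure exact sequence of $A$-modules, since tensoring sheaves matches tensoring modules. Thus a pure exact sequence $\CE$ on $U$ is, up to this equivalence, a pure exact sequence of modules, and the classical characterization of module purity lets me write it as a direct limit $\CE\cong\Lim\CE_i$ of \emph{split} short exact sequences. Concretely, I would express the cokernel as a direct limit of finitely presented modules and take pullbacks along the structure maps; because a pure exact sequence with finitely presented cokernel splits, each term of the resulting direct system is split, and passing to the limit recovers $\CE$.

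With this in hand the argument is short. First I would invoke that $f_*$ commutes with direct limits of quasi-coherent sheaves: the inclusion $f$ of the quasi-compact open $U$ into the quasi-separated scheme $X$ is a quasi-compact and separated morphism, hence quasi-compact and quasi-separated, and for such morphisms $f_*$ preserves direct limits. Therefore $f_*\CE\cong f_*(\Lim\CE_i)\cong\Lim f_*\CE_i$. Next, each $\CE_i$ is split, and since $f_*$ is additive it carries split exact sequences to split exact sequences; in particular every $f_*\CE_i$ is split, hence pure. Finally $f_*\CE$ is a direct limit of pure exact sequences, so Lemma \ref{pur112} shows $f_*\CE$ is pure, which is exactly the claim.

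The main obstacle, and the step that genuinely uses the global hypotheses, is the commutation of $f_*$ with direct limits. A tempting alternative would be to verify purity of $f_*\CE$ stalk by stalk: at a point $x\in U$ one has $(f_*\CE)_x\cong\CE_x$ because $(f_*\CG)|_U\cong\CG$, so purity is immediate there; but at a point $x\notin U$ the stalk $(f_*\CG)_x$ is a colimit of sections over the traces of neighborhoods of $x$, and I see no transparent way to control its purity directly, especially without semi-separatedness. Routing the argument through the direct-limit presentation avoids analyzing these boundary stalks altogether and reduces the statement to the quasi-compact quasi-separated colimit fact together with Lemma \ref{pur112}.
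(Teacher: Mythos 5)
Your proof is correct and takes essentially the same route as the paper: the paper also writes the pure exact sequence as a direct limit of split exact sequences (citing Proposition \ref{pesaram} for that classical fact rather than reproving it via finitely presented cokernels and pullbacks), then uses that $f_*$ preserves direct limits (citing \cite[Lemma B.6]{TT}) and that additive functors preserve split exactness, and concludes with Lemma \ref{pur112}. The only difference is that you spell out the two citations' content explicitly, which is harmless.
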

\begin{proof}
Let $\CL$ be a pure exact sequence of $\CO_U$-modules. By
Proposition \ref{pesaram}, $\CL=\limt\CL_j$ where for any $j$,
$\CL_j$ is split exact. We know that  $f_*$ preserves direct limits
(\cite[pp. 410, Lemma B.6]{TT}), i.e., we have the following
isomorphism
\begin{align}\label{limitd}
  f_*(\CL)=f_*(\limt\CL_j)=\limt(f_*(\CL_j))
\end{align}
where $f_*(\CL_j)$ splits for each $j$. Then, by Lemma \ref{pur112},
$f_*(\CL)$ is pure.
\end{proof}

Suppose that  $\CF$ is an $\CO_X$-module.  The $p$-th
$\check{\mathrm{C}}$ech $\CO_X$-module of $\CF$  with respect to the
cover $\mathfrak{U}$ of $X$ is defined by
$\mathfrak{C}^{p}(\mathfrak{U},\CF)=\oplus_{i_0<...<i_p}f_*(\CF|_{U_{i_0,...,i_p}})$
over sequences $i_0 < ... < i_p$ of length $p$ ($0\leq p\leq n$)
whenever $U_{i_0, ... ,i_p} = U_{i_0}\cap ... \cap U_{i_p} $ and
$f:U_{i_0, ... ,i_p}\lrt X$ is the inclusion.

\begin{lemma}\label{Cech0011}
Let $\F$ be a pure acyclic complex of $\CO_X$-modules. Then, for any
$0\leq p\leq n$, $\mathfrak{C}^p(\mathfrak{U},\F)$ is a  direct
limit of contractible complexes.
\end{lemma}
\begin{proof}
Without loss of generality, we prove the assertion for $p=1$. Since
$U_0$ and $U_1$ are affine open subset of $X$ then, $\F(U_0)$ is a
pure acyclic complex of $R_0$-modules and $\F(U_1)$ is a pure
acyclic complex of $R_1$-modules. By Proposition \ref{pesaram}, they
are direct limits of contractible complexes. So,
$\F|_{U_0}=\widetilde{\F(U_0)}$ and $\F|_{U_1}=\widetilde{\F(U_1)}$
are direct limits of contractible complexes. Therefore,
$\F|_{U_0\cap U_1}=(\F|_{U_0})|_{U_1}=(\F|_{U_1})|_{U_0}$ are direct
limits of contractible complexes ($U_0\cap U_1$ is not necessarily
affine). By \cite[pp. 410, Lemma B.6]{TT}, $f_*$ preserves direct
limits. It follows that $f_*\F|_{U_0}$, $f_*\F|_{U_0}$ and
$f_*\F|_{U_0\cap U_1}$ are direct limit of contractible complexes.
By a similar argument used for the case $p=1$, we deduce that for
any $0\leq p\leq n$, $\F|_{U_{i_0}}$, $\F|_{U_{i_1}}$, ...,
$\F|_{U_{i_p}}$, ..., $\F|_{U_{i_0, ... ,i_p}}$ are direct limits of
contractible complexes ($U_{i_0, ... ,i_p}$ is not necessarily
affine). So, by \cite[pp. 410, Lemma B.6]{TT}, $f_*\F|_{U_{i_0}}$,
..., $f_*\F|_{U_{i_n}}$, ..., $f_*\F|_{U_{i_0, ... ,i_n}}$ are
direct limits of contractible complexes. This implies that
$\mathfrak{C}^0(\mathfrak{U},\F)$, ...,
$\mathfrak{C}^n(\mathfrak{U},\F)$ are  direct limits of contractible
complexes.
\end{proof}

\begin{remark}
It is necessary to emphasis that, the direct image functor $f_*$ is
not exact in general case. But in some situations it preserves pure
exact sequences (Lemma \ref{pur111}). In the proof of Lemma
\ref{Cech0011} we don't need the exactness of $f_*$. Actually,
\cite[pp. 410, Lemma B.6]{TT} is the only property of $f_*$ that we
need in the proof of Lemma \ref{Cech0011}.
\end{remark}

Let $\CPX$ be the class of all pure acyclic complexes of
$\CO_X$-modules. A complex $\C$ of $\CO_X$-modules  is called
\textit{dg}-\textit{pure} \textit{injective} if  it belongs to
$\CPX^\perp$.
\begin{remark} \label{cotor1}
By using \cite[Theorem 4.10]{EEO16} and a similar argument used in
\cite[Proposition 3.4]{EG}), one can deduce that a complex
$\mathbf{G}=(\CG^i, \delta^i)$ of $\CO_X$-modules is dg-pure
injective if and only if

\begin{itemize}
\item [$(i)$]  For any $i\in \Z$, $\CG^i$ is a pure injective $\CO_X$-module.
\item [$(ii)$] For any pure acyclic  complex $\X$,
$\mathbf{Hom}^\bullet_{\CO_X}(\mathbf{X},\mathbf{G})$ is an acyclic
complex.
\end{itemize}
Recall that, for each pair $\X,\Y\in\C(X)$, the complex
$\mathbf{Hom}^\bullet_{\CO_X}(\mathbf{X},\Y)$ is  given by
$$(\mathbf{Hom}^\bullet_{\CO_X}(\mathbf{X},\Y)^n=\prod_{i\in\Z}{\text{Hom}}_{\CO_X}(\X^{i},\Y^{i+n}),\delta^n)$$
where
$$\delta^n ((f^i)_{i\in\Z})=\delta_\Y^{i+n} f^i-(-1)^n f^{i+1}(\delta_\X^{i}).$$
\end{remark}

Assume that
$\C(\mathrm{dg}\textmd{-}\mathrm{Pinj}X)\subseteq\C(\mathrm{Pinj}X)$
be the class of all dg-pure injective complexes. Then the following
result holds in the exact category $\C(X)$.

\begin{proposition}\label{main412}
The pair $(\CPX,\C(\mathrm{dg}\textmd{-}\mathrm{Pinj}X))$ is a
complete cotorsion theory.
\end{proposition}
\begin{proof}
\cite[Corollary 3.10.]{EGO14}
\end{proof}

Proposition \ref{main412} helps us to show that any pure acyclic
complex of pure injective $\CO_X$-modules is contractible. As a
consequence, we can prove that any complex of pure injective
$\CO_X$-modules is dg-pure injective.

\begin{lemma}\label{esikhan1}
Let $\X\in\CPX$ and $\G\in\CPX^\perp$. Then, for any $i>0$,
$$\Ext^i_{\C(X)}(\X,\G)=0.$$
\end{lemma}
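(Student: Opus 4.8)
The plan is to reduce the statement to the acyclicity of the total Hom complex recorded in Proposition \ref{cotor1}(ii). The case $i=1$ is immediate: $\Ext^1_{\C(X)}(\X,\G)=0$ is precisely the condition expressing that $\G\in\CPX^\perp$ (cf. Proposition \ref{main412}). For $i\geq 2$ I would construct an injective coresolution of $\G$ in the exact category $(\C(X),\mathbf{E})$ whose terms are contractible complexes, compute $\Ext^\bullet_{\C(X)}(\X,\G)$ from it, and identify the answer with the cohomology of $\mathbf{Hom}^\bullet_{\CA}(\X,\G)$.

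The key observation is that, because of the pure injectivity of the components of $\G$, the pure exact structure $\mathbf{E}$ behaves like the degree-wise split structure near $\G$. By Proposition \ref{cotor1}(i) each $\CG^n$ is pure injective, so every conflation $0\to\G\to\mathbf{M}\to\mathbf{N}\to 0$ of $\mathbf{E}$ is degree-wise pure exact with pure injective kernel in each degree, hence degree-wise split. First I would use this to prove that any contractible complex $\mathbf{I}$ with pure injective components is an injective object of $(\C(X),\mathbf{E})$: a conflation $0\to\mathbf{I}\to\mathbf{M}\to\mathbf{N}\to 0$ of $\mathbf{E}$ is degree-wise split by the same reasoning, and a degree-wise split conflation whose kernel is contractible splits, since contractible complexes are injective for the degree-wise split structure. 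Next I would coresolve $\G$ by such objects via the standard contractible coresolution $0\to\G\to\mathbf{I}^0\to\mathbf{I}^1\to\cdots$, in which each $(\mathbf{I}^j)^n$ is a finite sum of terms $\CG^m$; thus every $\mathbf{I}^j$ is contractible with pure injective components, and every short exact sequence appearing in the coresolution is degree-wise split, hence a conflation of $\mathbf{E}$.

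Since the $\mathbf{I}^j$ are injective in $(\C(X),\mathbf{E})$ and form a conflation-coresolution of $\G$, they compute the derived functor, giving $\Ext^i_{\C(X)}(\X,\G)\cong H^i\big(\Hom_{\C(X)}(\X,\mathbf{I}^\bullet)\big)$ for all $i$. The same coresolution is also a coresolution by injectives for the degree-wise split structure, whose Ext groups are the cohomology of the total Hom complex, so $\Ext^i_{\C(X)}(\X,\G)\cong H^i\big(\mathbf{Hom}^\bullet_{\CA}(\X,\G)\big)$. By Proposition \ref{cotor1}(ii) the complex $\mathbf{Hom}^\bullet_{\CA}(\X,\G)$ is acyclic for $\X\in\CPX$ and $\G\in\CPX^\perp$, whence $\Ext^i_{\C(X)}(\X,\G)=0$ for every $i>0$. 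The main obstacle I anticipate is exactly the verification carried out in the second paragraph: that the terms of the contractible coresolution of $\G$ are injective for the pure exact structure $\mathbf{E}$, and not merely for the degree-wise split one, and that the coresolution legitimately computes $\Ext_{\C(X)}$. Once this is in place, the identification with $\mathbf{Hom}^\bullet_{\CA}(\X,\G)$ and the appeal to Proposition \ref{cotor1}(ii) are formal.
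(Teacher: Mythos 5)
Your proposal is correct, but it takes a genuinely different route from the paper. The paper's own proof is a one-line reduction: it invokes the completeness of the cotorsion theory $(\CPX,\CPX^\perp)$ established in Proposition \ref{main412} and then repeats the dimension-shifting argument of \cite[Lemma 3.7]{HS13}. Your argument never touches Proposition \ref{main412}; its engine is Proposition \ref{cotor1}(i): since the components of $\G$ are pure injective, every conflation of $\mathbf{E}$ starting at $\G$ (or at any complex with pure injective components) is degree-wise split, so the contractible cone coresolution $0\to\G\to\mathrm{Cone}(\mathrm{id}_{\G})\to\mathrm{Cone}(\mathrm{id}_{\G[1]})\to\cdots$ is a coresolution by injective objects of $(\C(X),\mathbf{E})$; computing $\Ext$ from it identifies $\Ext^i_{\C(X)}(\X,\G)$, for $i\geq 1$, with the $i$-th cohomology of $\mathbf{Hom}^\bullet_{\CA}(\X,\G)$, which vanishes by Proposition \ref{cotor1}(ii). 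What this buys is self-containedness --- no appeal to the external reference, nor to the set-theoretic machinery (cogeneration by a set, \cite[Proposition 5.8]{Sto13}) lying behind completeness --- and it exposes the actual mechanism: near an object of $\CPX^\perp$ the pure exact structure collapses to the degree-wise split one. The paper's citation buys brevity and uniformity with the flat case of \cite{HS13}. Two minor points: your identification $\Ext^i_{\C(X)}(\X,\G)\cong H^i\bigl(\mathbf{Hom}^\bullet_{\CA}(\X,\G)\bigr)$ holds for $i\geq 1$ but not for $i=0$ (there the two sides are $\Hom_{\C(X)}(\X,\G)$ and $\Hom_{\K(X)}(\X,\G)$ respectively), which is all you need; and your endgame can be shortened so as to avoid part (ii) of Proposition \ref{cotor1} altogether: the cosyzygies of your coresolution are the shifts $\G[j]$, so dimension shifting together with the shift-invariance of $\CPX$ gives $\Ext^i_{\C(X)}(\X,\G)\cong\Ext^1_{\C(X)}(\X,\G[i-1])\cong\Ext^1_{\C(X)}(\X[1-i],\G)=0$ immediately.
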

\begin{proof}
By Proposition \ref{main412}, we can repeat the argument used in the
proof of \cite[Lemma 3.7]{HS13}.
\end{proof}
\begin{theorem}\label{h3}
Let $\C$ be a  complex of pure injective $\CO_X$-modules and $\X$ be
a direct limit of contractible complexes. Then
$\mathbf{Hom}^\bullet_{\CO_X}(\X,\C)$ is an acyclic complex.

\end{theorem}
\begin{proof}
Let $\X=\limt\X_j$ with $\X_j$ being contractible complex for each
$j\in J$. Then, we have the following degree-wise categorical pure
exact sequence
\[\xymatrix@C-0.7pc@R-0.9pc{0\ar[r]&\K\ar[r]&\bigoplus_{i\in I}\X_i\ar[r]&\X\ar[r]&0}.\]
Since $\C$ is a complex of pure injectives then we have the
following exact sequences of complexes
\[\xymatrix@C-0.7pc@R-0.9pc{0\ar[r]&\mathbf{Hom}^\bullet_{\CO_X}(\X,\C)\ar[r]&\mathbf{Hom}^\bullet_{\CO_X}(\bigoplus_{i\in I}\X_i,\C)\ar[r]&
\mathbf{Hom}^\bullet_{\CO_X}(\K,\C)\ar[r]&0}.\]

But, $\mathbf{Hom}^\bullet_{\CO_X}(\bigoplus_{i\in
I}\X_i,\C)\cong\prod_{i\in I}\mathbf{Hom}^\bullet_{\CO_X}(\X_i,\C)$
is the product of acyclic complexes of abelian groups and  so, it is
acyclic. By a similar method used in \cite[Proposition 5.3]{Sto14},
we can find a continuous chain $(\K_j)_{j\in J}$ of contractible
subcomplexes  of $\K$ such that $\K=\bigcup_{j\in J}\K_j$ and for
any $j$, $\Ext^1_{\C(X)}(\K_j,\C)=0$. Then, by \cite[Lemma 1]{ET01}
(the proof holds in  the exact category $\C(X)$),
$\Ext^1_{\C(X)}(\K,\C)=0$. Therefore,
$\mathbf{Hom}^\bullet_{\CO_X}(\K,\C)$ is an acyclic complex. This
shows that $\mathbf{Hom}^\bullet_{\CO_X}(\X,\C)$ is also acyclic.
So, by Remark \ref{cotor1}, $\F\in{}^\perp\C$.
\end{proof}
\begin{theorem}\label{esi}
Let $\C$ be a  complex of pure injective $\CO_X$-modules. Then
$\CPX\subseteq{}^\perp\C$.
\end{theorem}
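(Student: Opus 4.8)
The statement $^{\perp}\mathbf{C}$ is the left orthogonal taken in the pure exact structure on $\C(X)$, so $\CPX\subseteq{}^{\perp}\mathbf{C}$ means precisely that $\Ext^1_{\C(X)}(\X,\mathbf{C})=0$ for every pure acyclic $\X$; equivalently, every conflation $0\to\mathbf{C}\to\Y\to\X\to0$ splits. The plan is to reduce this to a vanishing statement in $\K(X)$ and then to establish that statement by a \v{C}ech d\'evissage built on Remark \ref{Cech001}.

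First I would carry out the reduction. Such a conflation is degree-wise pure exact, and since each $\mathbf{C}^i$ is pure injective, each row $0\to\mathbf{C}^i\to\Y^i\to\X^i\to0$ splits; hence the conflation is degree-wise split. A degree-wise split short exact sequence of complexes corresponds to a morphism $\X\to\mathbf{C}[1]$ in $\K(X)$, and it splits in $\C(X)$ exactly when that morphism is zero. As $\mathbf{C}[1]$ is again a complex of pure injectives, it therefore suffices to prove that $\mathbf{Hom}^\bullet(\X,\mathbf{C})$ is acyclic, equivalently $\Hom_{\K(X)}(\X,\mathbf{C})=0$, for every $\X\in\CPX$ and every complex $\mathbf{C}$ of pure injective $\CO_X$-modules.

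The key case is $\X=\Lim\mathbf{W}_\lambda$ a direct limit of contractible complexes. Presenting the colimit as the cokernel of $1-s$ on $\bigoplus_\lambda\mathbf{W}_\lambda$ gives a degree-wise pure exact sequence of complexes; applying $\mathbf{Hom}^\bullet(-,\mathbf{C})$ and using that each $\mathbf{C}^j$ is pure injective keeps it exact in every degree, producing
\[0\to\mathbf{Hom}^\bullet(\X,\mathbf{C})\to\prod_\lambda\mathbf{Hom}^\bullet(\mathbf{W}_\lambda,\mathbf{C})\xrightarrow{\,1-s^*\,}\prod_\lambda\mathbf{Hom}^\bullet(\mathbf{W}_\lambda,\mathbf{C})\to0.\]
Each $\mathbf{W}_\lambda$ is contractible, so each $\mathbf{Hom}^\bullet(\mathbf{W}_\lambda,\mathbf{C})$, and hence their product, is contractible and in particular acyclic; the long exact cohomology sequence then forces $\mathbf{Hom}^\bullet(\X,\mathbf{C})$ to be acyclic. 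For general $\X\in\CPX$ I would feed the augmented \v{C}ech resolution $0\to\X\to\mathfrak{C}^0(\mathfrak{U},\X)\to\cdots\to\mathfrak{C}^n(\mathfrak{U},\X)\to0$ into this machine: it is stalk-wise split, hence degree-wise pure, so it splices into conflations $0\to\mathbf{Z}^p\to\mathfrak{C}^p(\mathfrak{U},\X)\to\mathbf{Z}^{p+1}\to0$ in $\C(X)$ with $\mathbf{Z}^0=\X$ and $\mathbf{Z}^{n+1}=0$. By Remark \ref{Cech001} each $\mathfrak{C}^p(\mathfrak{U},\X)$ is a direct limit of contractible complexes, so the key case makes $\mathbf{Hom}^\bullet(\mathfrak{C}^p(\mathfrak{U},\X),\mathbf{C})$ acyclic; applying $\mathbf{Hom}^\bullet(-,\mathbf{C})$ to each conflation (again exact, by pure injectivity of the $\mathbf{C}^j$) and descending along the resulting long exact sequences from $\mathbf{Z}^n\cong\mathfrak{C}^n(\mathfrak{U},\X)$ down to $\mathbf{Z}^0=\X$ gives the acyclicity of $\mathbf{Hom}^\bullet(\X,\mathbf{C})$, as required.

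The main obstacle is the key case: the whole argument hinges on pure injectivity converting the pure presentation of a filtered colimit into an exact sequence of Hom-complexes, combined with the fact that arbitrary products of contractible complexes of abelian groups remain contractible. Once this is in place, Remark \ref{Cech001} reduces the non-affine situation to that case and the finite length of the \v{C}ech resolution makes the d\'evissage terminate.
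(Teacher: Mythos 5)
Your proof has the same skeleton as the paper's: the \v{C}ech resolution \eqref{ss1} is degree-wise pure exact, its terms $\mathfrak{C}^p(\mathfrak{U},\X)$ are direct limits of contractible complexes by Remark \ref{Cech001}, and a finite d\'evissage transfers orthogonality from the terms down to $\X$. Your reduction of $\Ext^1_{\C(X)}(\X,\C)=0$ to $\Hom_{\K(X)}(\X,\C[1])=0$ via degree-wise splitness is correct, and your d\'evissage through the total Hom-complex is in fact cleaner than the paper's: at the corresponding point the paper cites Lemma \ref{esikhan1}, which is stated for $\G\in\CPX^{\perp}$ and so cannot legitimately be applied to $\C$, since $\C\in\CPX^{\perp}$ is exactly what is being proved.

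The genuine gap is in what you call the key case. The telescope presentation $0\to\bigoplus_\lambda\mathbf{W}_\lambda\xrightarrow{1-s}\bigoplus_\lambda\mathbf{W}_\lambda\to\X\to0$ exists only when the direct system is indexed by $(\mathbb{N},\leq)$, or by a directed set with a countable cofinal chain: defining $s$ requires choosing a successor for each index, and for a general directed poset no such choice makes $1-s$ a monomorphism whose cokernel is the colimit; the presentation valid in general, $\bigoplus_{\lambda\leq\mu}\mathbf{W}_\lambda\to\bigoplus_\lambda\mathbf{W}_\lambda\to\X\to0$, has a nonzero kernel about which your argument says nothing. This matters because the systems furnished by Remark \ref{Cech001} come from Proposition \ref{pesaram} (\v{S}tov\'{\i}\v{c}ek, Emmanouil), whose direct systems are indexed by arbitrary directed sets, not by chains; so your key case, as proved, covers only sequential colimits and does not apply to $\mathfrak{C}^p(\mathfrak{U},\X)$. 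The missing ingredient is precisely the closure of ${}^{\perp}\C$ under arbitrary direct limits --- which is also the point the paper itself merely asserts without proof. Be aware that this closure is not elementary at the level of complexes: the classical argument for a single pure injective module (apply $\Hom(-,N)$ to the canonical pure presentation of the colimit) does not lift, because a non-contractible complex of pure injectives is not an injective object of the degree-wise pure exact structure on $\C(X)$; handling uncountable filtered colimits is the real content one has to import (in the affine case this is \cite[Theorem 5.4]{Sto14}) or prove by a different mechanism, e.g.\ by exploiting that pure injectives are direct summands of the sheaves $\CF^{++}$ of Proposition \ref{purinj}, for which Hom is a character dual of a tensor functor and hence interacts well with direct limits.
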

\begin{proof}
Let $\F=(\CF^i,\pa^i_\F)$ be a pure acyclic complex of
$\CO_X$-modules and

\begin{align}\label{ss1}
  \xymatrix@C-0.7pc@R-0.9pc{0\ar[r]&\F\ar[r]&\mathfrak{C}^0(\mathfrak{U},\F)\ar[r]
&\mathfrak{C}^1(\mathfrak{U},
\F)\ar[r]&\cdots\ar[r]&\mathfrak{C}^{n-1}(\mathfrak{U},\F)\ar[r]&\mathfrak{C}^{n}(\mathfrak{U},\F)\ar[r]&
0}
\end{align}
be its $\check{\mathrm{C}}$ech resolution.  By the proof of
\cite[Lemma III. 4.2]{H}, we deduce that for any $i\in\Z$,  the
complex

\begin{align}\label{ss1110}
  \xymatrix@C-0.9pc@R-0.9pc{0\ar[r]&\CF^i\ar[r]&\mathfrak{C}^0(\mathfrak{U},\CF^i)\ar[r]
&\mathfrak{C}^1(\mathfrak{U},
\CF^i)\ar[r]&\cdots\ar[r]&\mathfrak{C}^{n-1}(\mathfrak{U},\CF^i)\ar[r]&\mathfrak{C}^{n}(\mathfrak{U},\CF^i)\ar[r]&
0}
\end{align}
is pure acyclic. This implies that \eqref{ss1} is a degree-wise pure
acyclic complex of complexes. Moreover, by Lemma \ref{Cech0011}, for
any $0\leq p\leq n$, $\mathfrak{C}^p(\mathfrak{U},\F)$ is a direct
limit of contractible complexes and  hence by Lemma \ref{h3},
$\mathbf{Hom}^\bullet_{\CO_X}(\mathfrak{C}^p(\mathfrak{U},\F),\C)$
is an acyclic complex.  We can break \eqref{ss1} to the following
degree-wise pure exact sequences ($0\leq p\leq n$),
\[\xymatrix@C-0.7pc@R-0.9pc{0\ar[r]&\K^{p-1}\ar[r]&\mathfrak{C}^p(\mathfrak{U},\F)\ar[r]&\mathbf{P}^p\ar[r]&0.}\]
By Lemma \ref{h3},
$\mathbf{Hom}^\bullet_{\CO_X}(\mathfrak{C}^p(\mathfrak{U},\F),\C)$,
$\mathbf{Hom}^\bullet_{\CO_X}(\mathbf{P}^p,\C)$ are  acyclic
complexes. Therefore, for any $p\geq 1$,
$\mathbf{Hom}^\bullet_{\CO_X}(\mathbf{K}^{p-1},\C)$ is an acyclic
complex. Especially,
$\mathbf{Hom}^\bullet_{\CO_X}(\mathbf{K}^0,\C)=\mathbf{Hom}^\bullet_{\CO_X}(\F,\C)$
is  acyclic. Therefore, by Remark \ref{cotor1}, $\F\in{}^\perp\C$.
\end{proof}
\begin{corollary}\label{abtin1}\label{yas1}
Any pure acyclic complex of pure injective $\CO_X$-modules  is
contractible.
\end{corollary}
\begin{corollary}\label{abtin2}
Any complex of pure injective $\CO_X$-modules is dg-pure injective.
\end{corollary}
\section{The pure derived category of $\CO_X$-modules}
Let $\CT$ be a triangulated category. A triangulated subcategory
$\CS$ of $\CT$ is called \textit{thick} if it is closed under direct
summands. Assume that $\CS$ is a thick  subcategory of $\CT$. The
right orthogonal of $\CS$ in $\CT$ is defined  by
\[ \CS^\perp=\{ X \in \CT \ | \ \Hom_\CT(S,X)=0, \ {\rm{for \ all}} \ S \in \CS \}.\]
By  \cite[Theorem 9.1.13]{N01}, the inclusion $\CS \lrt \CT$ has a
right adjoint if and only if for any object $\mathbf{T}$ in $\CT$,
there is a distinguished triangle $$ \xymatrix@C-0.5pc{\mathbf{S}
\ar[r] & \mathbf{T} \ar[r] & \mathbf{S}' \ar[r] &\Sigma \mathbf{S} }
$$ in $\CT$ where $\mathbf{S} \in \CS$ and $\mathbf{S}' \in
\CS^\perp$. This triangle is unique up to isomorphism and the right
adjoint of $\CS \lrt \CT$ maps $\mathbf{T}$ to $\mathbf{S}$. This
situation implies an equivalence $\CS^\perp \lrt \CT/\CS$  of
triangulated categories. Let $\K(X)$ be the homotopy category of
$\mathfrak{Qco}X$, $\KPACX$ be its thick subcategory consisting of
pure acyclic complexes and $\KPIX$ be the essential image of the
homotopy category of pure injective $\CO_X$-modules in $\K(X)$
($\KPIX$ is closed under isomorphism). The following Theorem, is the
main result of this paper.

\begin{theorem}\label{esi12}
$\KPIX=\KPACX^\perp$.
\end{theorem}

\begin{proof}

Assume that $\C$ is an object of $\KPIX$. Then, by Corollary
\ref{abtin2} it is a dg-pure injective complex and so by Remark
\ref{cotor1},  $\C\in\KPACX^\perp$. Conversely, let
$\X\in\KPACX^\perp$. By Proposition \ref{main412}, there is a
degree-wise pure exact sequence
$\xymatrix@C-0.7pc@R-0.9pc{0\ar[r]&\mathbf{X}\ar[r]^f&\mathbf{I}\ar[r]&\mathbf{P}\ar[r]&0}$
of complexes, where $\mathbf{P}\in\CPX$ and
$\mathbf{I}\in\CPX^\perp\subseteq\C(\mathrm{Pinj}X)$. Hence,  we
obtain a triangle
\begin{align}\label{331}
\xymatrix@C-0.7pc@R-0.9pc{\X\ar[r]&\mathbf{I}\ar[r]&\mathbf{P}_0\ar[r]&\Sigma\X}
\end{align}
where  $\mathbf{P}_0=\mathrm{Cone}(f)\in\KPACX$ and
$\mathbf{I}\in\mathbf{K}(\mathrm{Pinj}X)$. By applying the
cohomological $\Hom_{\K(X)}(\textmd{-},\X)$ the proof completes.
\end{proof}
\begin{corollary}
There is an equivalence $\KPIX\lrt\DPACX$.
\end{corollary}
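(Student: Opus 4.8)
The plan is to promote the inclusion $\KPACX^\perp\subseteq\KPIX$ supplied by Proposition \ref{cotor1} to an equality and then read off the equivalence from Proposition \ref{cct1}. Indeed, Proposition \ref{cct1} already gives $\KPACX^\perp\simeq\DPACX$, while Proposition \ref{abs1} tells us that $\KPACX^\perp=\KPIX$ holds exactly when every pure acyclic complex of pure injective $\CO_X$-modules is contractible. Thus the whole corollary collapses to this single contractibility statement, after which $\KPIX=\KPACX^\perp\simeq\DPACX$ gives the claim.

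To establish contractibility I would fix a pure acyclic complex $\I$ of pure injective $\CO_X$-modules and consider the canonical short exact sequence
$$\xymatrix@C-0.7pc{0\ar[r]&\I\ar[r]&{\rm Cone}(\mathrm{id}_{\I})\ar[r]&\Sigma\I\ar[r]&0}$$
of complexes. In each degree this is a split short exact sequence, so it is degree-wise pure and hence a conflation for the pure exact structure $\mathbf{E}$ on $\C(X)$; by the convention of Section 2 its class lies in $\Ext^1_{\C(X)}(\Sigma\I,\I)$. Now $\Sigma\I$ is again pure acyclic, so $\Sigma\I\in\CPX$, while $\I$ is a complex of pure injectives. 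Applying Theorem \ref{esi} with $\C=\I$ yields $\CPX\subseteq{}^\perp\I$, and in particular $\Ext^1_{\C(X)}(\Sigma\I,\I)=0$. Hence the displayed conflation splits in $\C(X)$, which exhibits $\I$ as a direct summand of ${\rm Cone}(\mathrm{id}_{\I})$. Since the cone of an identity map is contractible and a direct summand of a contractible complex is again contractible, $\I$ is contractible, as required.

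With contractibility in hand, Proposition \ref{abs1} upgrades the inclusion to the equality $\KPACX^\perp=\KPIX$, and combining this with the equivalence $\KPACX^\perp\simeq\DPACX$ of Proposition \ref{cct1} produces the desired equivalence $\KPIX\lrt\DPACX$. The only point needing care is bookkeeping: one must orient the cone sequence so that its extension class lands in the group $\Ext^1_{\C(X)}(\Sigma\I,\I)$ that Theorem \ref{esi} annihilates, and then recall that a split conflation makes the sub-object a retract. I do not anticipate a genuine obstacle, since the substantive work — the $\check{\mathrm{C}}$ech-resolution argument and the closure of ${}^\perp\C$ under direct limits behind Theorem \ref{esi}, the cotorsion conflations of Proposition \ref{main123}, and the adjoint-functor description in Proposition \ref{cct1} — has already been carried out, so the corollary is essentially an assembly of these results.
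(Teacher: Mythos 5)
Your proof is correct and follows essentially the same route as the paper's: Theorem \ref{esi} together with Proposition \ref{abs1} yields $\KPACX^\perp=\KPIX$, and Proposition \ref{cct1} then supplies the equivalence with $\DPACX$. The only difference is that you make explicit, via the splitting of the degree-wise split conflation $0\to\I\to\mathrm{Cone}(\mathrm{id}_{\I})\to\Sigma\I\to 0$ in $\Ext^1_{\C(X)}(\Sigma\I,\I)$, the step from Theorem \ref{esi} to the contractibility of pure acyclic complexes of pure injectives, a step the paper leaves implicit.
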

\begin{proof}
By the proof of Theorem \ref{esi12},   we have  a distinguished
triangle
\begin{align}\label{331}
\xymatrix@C-0.7pc@R-0.9pc{\X\ar[r]&\mathbf{I}\ar[r]&\mathbf{P}_0\ar[r]&\Sigma\X}
\end{align}
where  $\mathbf{P}_0\in\KPACX$ and $\mathbf{I}\in
\KPACX^\perp\subseteq\mathbf{K}(\mathrm{Pinj}X)$.  Therefore,
$\KPACX\lrt\K(X)$ admits a right adjoint and hence, we have  an
equivalence $\KPIX=\KPACX^\perp\lrt\DPACX$ of triangulated
categories.
\end{proof}
Recently, the existence of adjoint pairs of functors between
homotopy categories have been interested  (see, \cite{N14},
\cite{K12}, \cite{Sto14} \cite{K14}, \cite{Gi15}). The following
result is in this direction.
\begin{corollary}
The inclusion $\KPIX\lrt\K(X)$ admits a left adjoint.
\end{corollary}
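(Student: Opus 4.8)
The plan is to deduce the existence of the left adjoint directly from the triangle already produced in the proof of Proposition \ref{cct1}, via the dual of \cite[Theorem 9.1.13]{N01}. Recall that for the inclusion $\KPIX\lrt\K(X)$ to admit a left adjoint it is enough that every object $\X$ of $\K(X)$ fits into a triangle $\mathbf{A}\lrt\X\lrt\mathbf{B}\lrt\Sigma\mathbf{A}$ with $\mathbf{B}\in\KPIX$ and $\mathbf{A}\in{}^\perp\KPIX$.

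First I would recall the triangle \eqref{331} constructed in the proof of Proposition \ref{cct1}: for any complex $\X$ of $\CO_X$-modules there is a triangle $\X\lrt\mathbf{I}\lrt\mathbf{P}_0\lrt\Sigma\X$ with $\mathbf{I}\in\KPIX$ and $\mathbf{P}_0\in\KPACX$. Rotating it yields $\Sigma^{-1}\mathbf{P}_0\lrt\X\lrt\mathbf{I}\lrt\mathbf{P}_0$, and since $\KPACX$ is a triangulated subcategory it is closed under the shift, so $\Sigma^{-1}\mathbf{P}_0\in\KPACX$. Thus every $\X$ sits in a triangle whose outer term lies in $\KPACX$ and whose remaining term lies in $\KPIX$.

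The key observation is that, by Theorem \ref{esi} together with Proposition \ref{abs1}, one has $\KPIX=\KPACX^\perp$, while for any subcategory the containment $\KPACX\subseteq{}^\perp(\KPACX^\perp)$ holds automatically. Combining these gives $\KPACX\subseteq{}^\perp\KPIX$, so in particular $\Sigma^{-1}\mathbf{P}_0\in{}^\perp\KPIX$. Hence the rotated triangle has exactly the shape required above, and applying the dual of \cite[Theorem 9.1.13]{N01} produces the left adjoint of $\KPIX\lrt\K(X)$, sending $\X$ to $\Sigma^{-1}\mathbf{P}_0$.

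I do not expect a substantive obstacle here: the whole content is that the single triangle built for the right adjoint of $\KPACX\lrt\K(X)$ serves double duty, because the identity $\KPIX=\KPACX^\perp$ forces its pure acyclic term into ${}^\perp\KPIX$. The only point to check is that, after rotation, the two orthogonality conditions attach to the correct terms of the triangle, which is the routine bookkeeping indicated above.
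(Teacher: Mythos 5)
Your existence argument is correct and is essentially the paper's own (implicit) proof: the corollary is stated there without proof precisely because, as you observe, the triangle \eqref{331} from Proposition \ref{cct1}, rotated, together with the equality $\KPIX=\KPACX^\perp$ furnished by Theorem \ref{esi} and Proposition \ref{abs1}, satisfies the hypothesis of the dual of \cite[Theorem 9.1.13]{N01}. Your bookkeeping of which orthogonality condition attaches to which term of the rotated triangle $\Sigma^{-1}\mathbf{P}_0\lrt\X\lrt\mathbf{I}\lrt\mathbf{P}_0$ is also right.

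However, your final identification of the adjoint is wrong: the left adjoint of the inclusion $\KPIX\lrt\K(X)$ sends $\X$ to $\mathbf{I}$, not to $\Sigma^{-1}\mathbf{P}_0$. A left adjoint of an inclusion must take values in the subcategory $\KPIX$, whereas $\Sigma^{-1}\mathbf{P}_0$ lies in $\KPACX\subseteq{}^\perp\KPIX$. In the formulation quoted in the paper from \cite[Theorem 9.1.13]{N01}, the adjoint maps the ambient object to the component lying in $\CS$; dually here that component is $\mathbf{I}$, and the unit of the adjunction is the map $\X\lrt\mathbf{I}$ of the triangle. Concretely, for any $\mathbf{J}\in\KPIX$, applying $\Hom_{\K(X)}(-,\mathbf{J})$ to the rotated triangle and using $\Hom_{\K(X)}(\mathbf{P}_0,\mathbf{J})=0=\Hom_{\K(X)}(\Sigma^{-1}\mathbf{P}_0,\mathbf{J})$ gives $\Hom_{\K(X)}(\mathbf{I},\mathbf{J})\cong\Hom_{\K(X)}(\X,\mathbf{J})$, which is exactly the adjunction isomorphism; were the adjoint $\X\mapsto\Sigma^{-1}\mathbf{P}_0$, then taking $\X=\mathbf{J}$ a nonzero object of $\KPIX$ would force $\Hom_{\K(X)}(\mathbf{J},\mathbf{J})=0$, a contradiction. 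The existence claim of the corollary stands as you proved it; only this description of the adjoint needs correcting.
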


\vspace{.5cm}

\textbf{Acknowledgements:} The author is deeply grateful to the
referee for his/her careful reading of the manuscript. This work was
supported by Iran National Science Foundation: INSF.

\vspace{.5cm}

\end{document}